\renewcommand\thefootnote{\textcolor{MidnightBlue}{\arabic{footnote}}}
\newcommand{\crefnames}[3]{%
	\@for\next\coloneqq#1\do{%
		\expandafter\crefname\expandafter{\next}{#2}{#3}%
	}%
}
\setlist[itemize]{wide = 0pt, labelwidth = 2em, labelsep*=0em, itemindent = 0pt, leftmargin = \dimexpr\labelwidth + \labelsep\relax, noitemsep,topsep = 1ex,}
\setlist[enumerate]{wide = 0pt, labelwidth = 2em, labelsep*=0em, itemindent = 0pt, leftmargin = \dimexpr\labelwidth + \labelsep\relax, noitemsep,topsep = 1ex}
\theoremstyle{plain}
\renewcommand{\thethmx}{\Alph{thmx}} 
\newtheorem{theorem}{Theorem}[section]
\newtheorem{proposition}[theorem]{Proposition}
\newtheorem{corollary}[theorem]{Corollary}
\theoremstyle{definition}
\newtheorem{definition}[theorem]{Definition} 
\theoremstyle{remark}
\newtheorem*{remark*}{Remark}
\numberwithin{equation}{section}  
\theoremstyle{plain}
\newlist{thmlist}{enumerate}{1}
\setlist[thmlist]{wide = 0pt, labelwidth = 2em, labelsep*=0em, itemindent = 0pt, leftmargin = \dimexpr\labelwidth + \labelsep\relax, noitemsep,topsep = 1ex, font=\normalfont, label=(\roman*), ref=\thetheorem.(\roman{thmlisti})}
\newlist{thmenum}{enumerate}{1} 
\setlist[thmenum]{wide = 0pt, labelwidth = 2em, labelsep*=0em, itemindent = 0pt, leftmargin = \dimexpr\labelwidth + \labelsep\relax, noitemsep,topsep = 1ex, font=\normalfont, label=(\roman*), ref=\thethmx.(\roman{thmenumi})}
\newlist{corlist}{enumerate}{1} 
\setlist[corlist]{wide = 0pt, labelwidth = 2em, labelsep*=0em, itemindent = 0pt, leftmargin = \dimexpr\labelwidth + \labelsep\relax, noitemsep,topsep = 1ex, font=\normalfont, label=(\roman*), ref=\thecorx.(\roman{corlisti})}
\crefname{lemma}{Lemma}{Lemmas} 
\crefname{conjecture}{Conjecture}{Conjectures}
\crefname{theorem}{Theorem}{Theorems}
\crefname{proposition}{Proposition}{Propositions}
\crefname{definition}{Definition}{Definitions}
\crefname{remark}{Remark}{Remarks}
\crefname{corollary}{Corollary}{Corollaries}
\crefname{corx}{Corollary}{Corollaries}
\crefname{problem}{Problem}{Problems}
\crefname{thmx}{Theorem}{Theorems}
\crefname{claim}{Claim}{Claims}
\crefname{assumption}{Assumption}{Assumptions}
\crefname{main}{Main Theorem}{Main Theorems}
\newcommand{\R}{\mathbb{R}}
\newcommand{\Z}{\mathbb{Z}}
\begin{document}

\title{The applications of Bieri-Neumann-Strebel invariant on K\"ahler groups}

{
\author{Yuan Liu}
\email{lexliu@hku.hk}
\address{The University of Hong Kong, Pokfulam Road, Hong Kong} 
\urladdr{https://sites.google.com/view/yuan-lius-website}
}
\date{\today}

\begin{abstract}
We give several applications of the Bieri-Neumann-Strebel invariant on K\"ahler groups.
Specifically, we provide simpler proof of the Napier-Ramachandran theorem on the HNN decomposition about K\"ahler groups and show that amenable K\"ahler groups have an empty complement of the BNS invariant.
\end{abstract}

\maketitle

\begingroup
\renewcommand{\thefootnote}{}
\footnotetext{\noindent%
    \phantomsection\makebox[0pt][l]{\textit{2020 Mathematics Subject Classification}: Primary 20F65, 32Q15.}}
\addtocounter{footnote}{-1} 
\endgroup

\section{Introduction}
We say a finitely presented group $G$ is a K\"ahler group if it can be realized as the fundamental group of a compact K\"ahler manifold. Not every finitely presented group is a K\"ahler group, and it is a question of Serre to characterize K\"ahler groups. The readers may refer to the monographs \cite{ABCKT} and \cite{PyBook} for this topic. The overall picture for the K\"ahler groups is still very unclear.

For $G$ a finitely generated group, Bieri, Neumann, and Strebel introduced a celebrated invariant $\Sigma(G)$ in \cite{BNS87} (hereafter, we call it the BNS invariant). Unfortunately, the computation of the BNS invariant is extremely difficult and is only known for restricted types of groups up to now. Surprisingly, the combination of these two mysterious objects, i.e.
(the complement of) the BNS invariant of a K\"ahler group, is described completely by Delzant in \cite{Delzant10}, using Simpson's Lefschetz theorem\cite{Simpson}.

It is thus quite natural to use this BNS invariant to put restrictions on K\"ahler groups. For the HNN decomposition, some results were known for a long time thanks to the work of Napier and Ramachandran \cite[Theorem 0.2 and Theorem 0.3]{NR2008}. Now, with the help of the BNS invariant, we give a much easier explanation (\cref{thm: nr 1,thm: nr 2}). Moreover, we show that amenable K\"ahler groups have the empty complement of the BNS invariant (\cref{prop:amenable_kahler}).


\bigskip

This note is organized as follows. In \cref{sec:bns}, we provide essential background on the BNS invariant. In \cref{sec:hnn}, we prove the results on HNN decompositions about K\"ahler groups and on amenable K\"ahler groups. 

\section{The Bieri-Neumann-Strebel invariant}\label{sec:bns}

\subsection{General theory}
In 1987, a famous group theoretic invariant was introduced by Bieri, Neumann, and Strebel \cite{BNS87}, which is now called the Bieri-Neumann-Strebel (BNS for short) invariant. For a finitely generated group $G$, this invariant gives the full information when the kernel of an abelian quotient of $G$ is finitely generated (for its precise meaning, see \cref{prop:bns_abelian_quotient}). The readers may refer to \cite{BNS87} or the unpublished monograph \cite{BSunpublished} for more details. \textit{From now on, we always assume that the first Betti number $b_1(G)>0$.}

Let $G$ be a finitely generated group and $\chi\colon G\to \R$ be a non-trivial group homomorphism. Denote $G_{\chi}=\{g\in G\colon \chi(g)\geqslant 0\}$. For any given set of generators $S$, denote the associated Cayley graph of $G$ as $C\coloneqq\mathrm{Cay}(G; S)$. We define its subgraph $C_{\chi}$ as follows: (i) a vertex in $C$ belongs to $C_{\chi}$ if its $\chi$-value is nonnegative, (ii) an edge in $C$ belongs to $C_{\chi}$ if \textit{both} vertices of this edge have nonnegative $\chi$-values. We can consider if $C_{\chi}$ is connected and record the $\chi$ when this happens. Notice that this property does not depend on the generating set $S$ (see \cite[Theorem 2.1]{BSunpublished} or \cite[Proposition 11.1]{PyBook}) and is invariant if we multiply $\chi$ by a positive real constant. Denote $[\chi]$ to be this equivalent relation of multiplication by a positive real constant, and write
$\mathrm{S}(G)=(\mathrm{Hom}(G;\R)-\{0\})\slash \R^{+}$ for the collection of all these equivalent classes.

\begin{definition}
With the same notation as above, for $G$ being a finitely generated group, we define its BNS invariant as
$$\Sigma(G)=\{[\chi]\in \mathrm{S}(G)\colon C_{\chi} \text{\ is connected}\}$$
\end{definition}

We need the following properties of this invariant for later use.

\begin{proposition}\label{prop:property_BNS_union}(\cite[Theorem C]{BNS87})
Let $G$ be a finitely presented group with no nonabelian free subgroups, then 
$$\Sigma(G)\cup -\Sigma(G)=\mathrm{S}(G)$$
where $-\Sigma(G)$ is the image of $\Sigma(G)$ under the antipodal map.
\end{proposition}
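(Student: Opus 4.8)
The plan is to prove the contrapositive: if some class $[\chi] \in \mathrm{S}(G)$ lies in neither $\Sigma(G)$ nor $-\Sigma(G)$, then $G$ contains a nonabelian free subgroup, contradicting the hypothesis. Unwinding the antipodal map, $[\chi] \in -\Sigma(G)$ is the same as $[-\chi] \in \Sigma(G)$, so the assumption to be refuted is that \emph{both} $C_{\chi}$ and $C_{-\chi}$ are disconnected. Geometrically this says the Cayley graph $C = \mathrm{Cay}(G;S)$ fails to be connected above any level in the $\chi$-direction and likewise below any level in the $-\chi$-direction. The whole strategy is to convert these two independent failures of connectivity into the ``attracting'' and ``repelling'' room needed to run a ping-pong argument and exhibit a free group of rank two.

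First I would make the disconnectedness quantitative. Fixing a finite symmetric generating set $S$ and viewing $\chi$ as a height function on $C$, the hypothesis $[\chi] \notin \Sigma(G)$ produces two vertices $u, v$ with $\chi \geq 0$ that cannot be joined by any path remaining in the super-level set $\{w : \chi(w) \geq 0\}$. Using that the monoid $G_{\chi}$ acts on the left preserving each super-level set (since $\chi(gw) = \chi(g) + \chi(w)$) together with the finiteness of $S$, I would upgrade this into \emph{separated ends at $+\infty$}: super-level rays lying in distinct components that remain separated after translating upward by any $g$ with $\chi(g) > 0$. The symmetric argument applied to $-\chi$ yields separated ends at $-\infty$. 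The finiteness hypotheses on $G$ enter exactly here, to guarantee that a bounded amount of combinatorial data controls the components at every level, so the separations are uniform rather than drifting off to infinity.

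With the two separation packages in hand, I would choose an element $a$ with $\chi(a) > 0$ acting as a ``hyperbolic'' translation along the $\chi$-direction, with one fixed end at $+\infty$ and one at $-\infty$, and then use the disconnections to pick $s \in G$ that moves this configuration transversally, setting $b = s a s^{-1}$. The point is that $[\chi] \notin \Sigma(G)$ supplies disjoint room around the $+\infty$ end and $[-\chi] \notin \Sigma(G)$ supplies disjoint room around the $-\infty$ end, so that for large $n$ the translates $a^{\pm n}$ and $b^{\pm n}$ push a fixed ball into four pairwise-disjoint end-regions; verifying these four inclusions gives that $\langle a, b \rangle$ is free of rank two.

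The main obstacle, and the technical heart of the argument, is precisely the construction and verification of the ping-pong sets. When $\chi$ has cyclic (discrete) image this is transparent: the picture is the Bass--Serre tree of a two-sided non-ascending HNN decomposition of $G$, in which $a$ is genuinely hyperbolic and the two failures of connectivity say the edge inclusions are proper on both sides, leaving the standard room for ping-pong. The difficulty is the case in which $\chi(G)$ is dense in $\R$, where no honest tree is available and one must work directly with the coarse geometry of the level sets of $C$, showing that the chosen components and their translates are genuinely disjoint and correctly nested. Handling this dense-image case uniformly, rather than only along rational directions, is what makes the theorem nontrivial and is where I expect the real work to lie.
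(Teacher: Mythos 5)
The paper does not prove this proposition at all: it is quoted as Theorem~C of \cite{BNS87}, and Section~3 uses it as a black box. So the only meaningful benchmark is the original Bieri--Neumann--Strebel argument, and against that benchmark your text is a strategy outline rather than a proof. The overall shape is right (two-sided failure of connectivity should force a rank-two free subgroup via ping-pong), but both load-bearing steps are deferred rather than carried out: the ``upgrade'' from disconnectedness of $C_{\chi}$ and $C_{-\chi}$ to uniformly separated, correctly nested ping-pong regions is only asserted, and the dense-image case is explicitly left as ``where I expect the real work to lie.'' Even the rational case you call transparent silently invokes two substantial theorems: that a finitely \emph{presented} group with a surjection onto $\Z$ splits as an HNN extension over a finitely generated base with finitely generated associated subgroups (the Bieri--Strebel splitting theorem), and that exceptionality of $[\chi]$ and $[-\chi]$ translates into properness of the two edge inclusions (essentially \cref{prop:HNN-valuation}, i.e.\ \cite[Proposition 3.1]{Brown87}). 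Neither is named or proved, and without the former there is no Bass--Serre tree on which to play ping-pong.

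The one point that is genuinely wrong, rather than merely missing, is your account of where finite presentation enters. You say it guarantees that ``a bounded amount of combinatorial data controls the components at every level''; that is just local finiteness of the Cayley graph, which finite generation already provides. The distinction is not cosmetic, because the proposition is \emph{false} for finitely generated groups: the lamplighter group $(\Z/2)\wr\Z$ is finitely generated and metabelian, hence contains no nonabelian free subgroup, yet $\Sigma=\emptyset\neq\mathrm{S}(G)$. Indeed, the kernel of the projection $\chi$ to $\Z$ is $\bigoplus_{\Z}\Z/2$, which is not finitely generated, so by \cref{prop:bns_abelian_quotient} not both of $[\pm\chi]$ can lie in $\Sigma$; and the automorphism inverting the stable letter forces $\Sigma$ to be symmetric, so neither does. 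Your separated-ends construction, as described, would apply verbatim to this group and manufacture a free subgroup that does not exist, so the argument must break precisely at the step you have not supplied. Note also that the dense case cannot be rescued afterwards by rational approximation plus openness of $\Sigma$: the set $\Sigma(G)^{c}\cap\bigl(-\Sigma(G)\bigr)^{c}$ is closed, and a closed subset of $\mathrm{S}(G)$ missing all rational directions need not be empty. This is exactly why the proof in \cite{BNS87} must treat arbitrary characters directly.
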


\begin{proposition}\label{prop:bns_abelian_quotient}(\cite[Theorem B1]{BNS87})
If $G$ is a finitely generated group and $A$ is its non-trivial abelian quotient with kernel $N$, then $N$ is finitely generated if and only if 
$$\mathrm{S}(G,N)\subseteq \Sigma(G)$$
where $\mathrm{S}(G,N)=\{[\chi]\in \mathrm{S}(G)\colon \chi(N)=0\}$. In particular, $G'$ is finitely generated if and only if $\Sigma(G)=\mathrm{S}(G)$, where $G'=[G,G]$ is the commutator subgroup of $G$.
\end{proposition}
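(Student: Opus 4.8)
The plan is to prove the two implications separately. The forward one ($N$ finitely generated $\Rightarrow\mathrm{S}(G,N)\subseteq\Sigma(G)$) I would handle by a direct connectivity argument on the Cayley graph. Choose generators $n_1,\dots,n_k$ of $N$ together with $g_1,\dots,g_m\in G$ whose images generate $A=G/N$, so $S=\{n_1^{\pm1},\dots,g_m^{\pm1}\}$ generates $G$. Let $0\neq\chi$ vanish on $N$ and let $\bar\chi\colon A\to\R$ be the induced (nonzero) character. Since $\chi$ is constant on each coset $gN$, and the $n_i$-edges realise $gN$ as an embedded copy of the connected graph $\mathrm{Cay}(N;\{n_i\})$, every coset of nonnegative $\chi$-value lies in $C_\chi$ and is internally connected; moreover any $g_j$-edge out of $gN$ lands in $gg_jN$ (using $N\trianglelefteq G$), with the two endpoints carrying the two coset values. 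Collapsing cosets therefore identifies $C_\chi$ with $\mathrm{Cay}(A;\{\bar g_j\})$ restricted to $\{\bar\chi\geqslant0\}$, and it remains to see the latter is connected. For finitely generated abelian $A$ this is elementary: torsion lies in $\ker\bar\chi$ and connects vertices within a level, while for the free part the lattice points of a closed half-space in $\Z^r$ are joined by unit steps. Hence $[\chi]\in\Sigma(G)$.

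For the reverse direction, write $A=\Z^r\oplus T$ and let $\chi_1,\dots,\chi_r\colon G\to\Z$ be the coordinate characters of $\pi\colon G\to A$. The subgroup $M=\bigcap_i\ker\chi_i=\pi^{-1}(T)$ contains $N$ with finite index $|T|$, so $N$ is finitely generated iff $M$ is, and $\mathrm{S}(G,N)=\mathrm{S}(G,M)$ since every $\R$-character kills torsion; thus we may assume $A=\Z^r$. The base case $r=1$ is the Bieri--Strebel/HNN criterion: for $\chi\colon G\twoheadrightarrow\Z$ with lift $t$ of $1$, connectivity of both $C_\chi$ and $C_{-\chi}$ bounds, via cocompactness of the $G$-action on $C$, the range of exponents $i$ for which the conjugates $t^i s t^{-i}$ of the generators are needed, producing a finite generating set of $N=\ker\chi$. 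For $r\geqslant2$ I would induct: take $M=\ker\chi_r$, so that $G/M\cong\Z$ and $M/N\cong\Z^{r-1}$; since $[\pm\chi_r]\in\mathrm{S}(G,N)\subseteq\Sigma(G)$, the base case makes $M$ finitely generated, and every character of $M$ killing $N$ extends to one of $G$ (it is a combination of $\chi_1,\dots,\chi_{r-1}$), so the hypothesis supplies connectivity of all the relevant half-graphs; feeding this into the pair $(M,N)$ via the induction hypothesis yields that $N$ is finitely generated.

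The step I expect to be the main obstacle is the comparison of invariants in the inductive step, namely upgrading ``$[\psi]\in\Sigma(G)$ for $\psi=\sum_{i<r}c_i\chi_i$'' to ``$[\psi|_M]\in\Sigma(M)$'', since membership in $\Sigma$ is not inherited by subgroups in general. Resolving it requires using that $M$ is already known to be finitely generated and that $G=\langle M,t\rangle$ acts cocompactly on $C$: one must convert the connectivity of the half-graph of $G$ into that of the half-graph of $M$ with uniform path-length control. This is exactly where the compactness of the subsphere $\mathrm{S}(G,N)\cong S^{r-1}$ and the openness of $\Sigma(G)$ enter, turning the family of pointwise connectivity statements over all directions into a single finite generating set for $N$. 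The ``in particular'' clause is then the special case $N=G'$, $A=G^{\mathrm{ab}}$, for which $\mathrm{S}(G,G')=\mathrm{S}(G)$.
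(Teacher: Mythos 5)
The paper does not actually prove this statement: it quotes it as \cite[Theorem B1]{BNS87}, so your attempt has to be measured against the original argument rather than against anything in the text. Your forward direction ($N$ finitely generated $\Rightarrow\mathrm{S}(G,N)\subseteq\Sigma(G)$) is correct and is essentially the standard proof: with a generating set adapted to $N$ and to $A=G/N$, each coset $gN$ is an internally connected, $\chi$-constant subgraph, collapsing cosets reduces connectivity of $C_\chi$ to connectivity of a half-graph of the finitely generated abelian group $A$, and the latter is elementary; together with independence of the generating set this gives $[\chi]\in\Sigma(G)$. The torsion reduction and the rank-one base case (the classical Bieri--Strebel HNN criterion, which your exponent-bounding sketch correctly outlines) are also fine.

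The reverse direction, however, has a genuine gap exactly where you flag it, and the gap is fatal to the inductive scheme rather than a technical nuisance. To apply the induction hypothesis to the pair $(M,N)$ with $M=\ker\chi_r$ you need $\mathrm{S}(M,N)\subseteq\Sigma(M)$, but the hypothesis only yields $[\psi]\in\Sigma(G)$ for $\psi\in\mathrm{S}(G,N)$, and $\Sigma$-membership does not pass to restrictions on subgroups. Note what filling this gap would amount to: by the forward implication applied to the pair $(M,N)$, combined with your own induction hypothesis, the statement ``$[\psi|_M]\in\Sigma(M)$ for all $\psi\in\mathrm{S}(G,N)$ with $\psi|_M\neq 0$'' is \emph{equivalent} to ``$N$ is finitely generated'', i.e.\ to the full conclusion of the theorem. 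So the induction on $r$ does not reduce the difficulty at all; the entire content of the theorem is concentrated in the step you left open. The tools you gesture at --- openness of $\Sigma(G)$ (itself Theorem A of \cite{BNS87}, nontrivial and not proved here) and compactness of $\mathrm{S}(G,N)$ --- are indeed the ingredients of the genuine proof, but there they are used directly, with no induction and no restriction-to-subgroup step: one extracts uniform connectivity estimates valid on finitely many open patches covering the sphere $\mathrm{S}(G,N)$ and assembles from them an explicit finite generating set of $N$. Without that uniformity construction written out, the implication $\mathrm{S}(G,N)\subseteq\Sigma(G)\Rightarrow N$ finitely generated is not established.
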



\subsection{The BNS invariant for K\"ahler groups}
When this group $G$ is the fundamental group of a finite CW-complex $X$, we can give a topological explanation of this invariant \cite[section 5]{BNS87}. If we denote $\hat{X}$ as the maximal free abelian covering of $X$ with Galois group $H$. Notice that $\chi\in \mathrm{Hom}(G;\R)$ naturally factor through the maximal free abelian quotient $H$ of $G$, for which we still denote as $\chi: H\to \R$. Let $f:\hat{X}\to \R$ be the primitive function in the sense that $f(h.x)=f(x)+\chi(h)$ for any $h\in H$ and $x\in \hat{X}$. Now consider $\{x\in X\mid f(x)\geqslant 0\}$, it has a unique component on which $f$ is unbounded, and we denote it as $\hat{X}_{f}$ (\cite[Lemma 5.2]{BNS87}). Then we have $[\chi]\in \Sigma(G)$ if and only if the morphism on fundamental groups induced by the inclusion $\hat{X}_{f}\hookrightarrow \hat{X}$ is onto (\cite[Theroem 5.1]{BNS87}). Using this topological explanation along with Simpson's Lefschetz theorem, Delzant in \cite{Delzant10} gives a complete description of (the complement of) the BNS invariant for K\"ahler groups.

\begin{theorem}( \cite[Theorem 1.1]{Delzant10})\label{thm:delzant_bns_kahler}
Let $X$ be a compact K\"ahler manifold with the fundamental group $G$. For a non-trivial $\chi\in H^1(G;\R)$, we have $\chi$ is exceptional, i.e. $[\chi]\notin \Sigma(G)$,  if and only if there exists a holomorphic fibration (i.e. a holomorphic map with connected fibers) onto a hyperbolic Riemann orbifold $S$ of genus greater than or equal to $1$, say $f: X\to S$, such that $\chi\in f^*(H^1(\pi_1^{\mathrm{orb}}(S);\R))$.
\end{theorem}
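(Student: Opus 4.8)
The plan is to route the equivalence through the topological picture of $\Sigma(G)$ recalled just above and feed it into Simpson's Lefschetz theorem \cite{Simpson}. First I would use Hodge theory: since $X$ is compact K\"ahler, the class $\chi\in H^1(X;\R)$ has a harmonic representative that decomposes as $\omega+\overline{\omega}$ with $\omega\in H^0(X,\Omega^1_X)$ a holomorphic (hence closed) $1$-form, so that $\chi=2\,\mathrm{Re}(\omega)$. I would then unwind the topological criterion recalled above: on the maximal free abelian cover $\hat X$ with primitive function $f$ lifting $\chi$, exceptionality of $\chi$ means exactly that the inclusion of the unbounded component $\hat X_f$ of $\{f\ge 0\}$ into $\hat X$ fails to be $\pi_1$-surjective. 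The point of this translation is that the sublevel sets and level sets of $f$ are governed by the integral leaves of the holomorphic form $\omega$, which is precisely the geometry that Simpson's theorem controls.

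For the easy implication I would assume a holomorphic fibration $g\colon X\to S$ onto a hyperbolic Riemann orbifold $S$ of genus $\ge 1$ with $\chi=g^{*}\psi$ for some $\psi\in H^1(\pi_1^{\mathrm{orb}}(S);\R)$. Since $g$ has connected fibers, the induced map $g_{*}\colon G\to\pi_1^{\mathrm{orb}}(S)$ is onto. Because $S$ is a hyperbolic $2$-orbifold of genus $\ge 1$, the group $\pi_1^{\mathrm{orb}}(S)$ is a cocompact Fuchsian group, and such groups (like closed surface groups of genus $\ge 2$, to which they are commensurable) have empty BNS invariant; hence $[\psi]\notin\Sigma(\pi_1^{\mathrm{orb}}(S))$. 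I would then invoke the standard monotonicity of the exceptional set under an epimorphism, $g^{*}\big(\mathrm{S}(\pi_1^{\mathrm{orb}}(S))\setminus\Sigma(\pi_1^{\mathrm{orb}}(S))\big)\subseteq\mathrm{S}(G)\setminus\Sigma(G)$ (see \cite{BSunpublished}), to conclude $[\chi]\notin\Sigma(G)$.

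The substance is the converse, where Simpson's Lefschetz theorem does the heavy lifting. Assuming $[\chi]\notin\Sigma(G)$, the reformulation says the integral leaves of $\omega$ fail the Lefschetz-type $\pi_1$-surjectivity. Simpson's theorem gives a dichotomy for a nonzero holomorphic $1$-form on a compact K\"ahler manifold: either the generic leaf is connected and its inclusion is $\pi_1$-surjective (which, via the reformulation above, would force $[\chi]\in\Sigma(G)$), or $\omega$ is pulled back from a curve, i.e.\ there is a surjective holomorphic map with connected fibers $g\colon X\to C$ onto a smooth projective curve $C$ together with a nonzero holomorphic $1$-form $\alpha$ on $C$ with $\omega=g^{*}\alpha$. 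Exceptionality of $\chi$ rules out the first alternative, so I obtain such a fibration; since $C$ carries the nonzero form $\alpha$, its genus is $\ge 1$.

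The remaining work is to upgrade this \emph{holomorphic} conclusion to the stated \emph{real orbifold} statement $\chi\in g^{*}H^1(\pi_1^{\mathrm{orb}}(S);\R)$ and to force hyperbolicity. First, the real class descends automatically: $\omega=g^{*}\alpha$ gives $\overline{\omega}=g^{*}\overline{\alpha}$, so $\chi=g^{*}(2\,\mathrm{Re}\,\alpha)$ with $2\,\mathrm{Re}\,\alpha\in H^1(C;\R)$. Second, I would equip $C$ with the orbifold structure whose cone points record the multiplicities of the multiple fibers of $g$, producing $S$ with $g_{*}\colon G\to\pi_1^{\mathrm{orb}}(S)$ onto and $\chi\in g^{*}H^1(\pi_1^{\mathrm{orb}}(S);\R)$. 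Finally, for hyperbolicity: the only non-hyperbolic possibility in genus $\ge 1$ is that $S$ is a smooth torus with $\pi_1^{\mathrm{orb}}(S)=\Z^2$; but then, the generic fiber of $g$ being compact, the kernel $N$ of $g_{*}$ is finitely generated, and since $\chi$ vanishes on $N$ we would get $[\chi]\in\mathrm{S}(G,N)\subseteq\Sigma(G)$ by \cref{prop:bns_abelian_quotient}, contradicting exceptionality; hence $S$ is hyperbolic of genus $\ge 1$. The hard part will be the precise matching between the group-theoretic surjectivity defining $\Sigma(G)$ and the hypotheses and conclusion of Simpson's theorem, in particular relating the half-space component $\hat X_f\subseteq\{f\ge 0\}$ to the generic leaf $\{f=t\}$, together with the careful bookkeeping of multiple fibers needed to realize the orbifold base.
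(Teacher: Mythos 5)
First, a point of context: the paper does not prove this statement at all --- it is quoted verbatim as Delzant's theorem (\cite[Theorem 1.1]{Delzant10}), with only the remark that Delzant's argument combines the topological description of $\Sigma(G)$ with Simpson's Lefschetz theorem. So there is no in-paper proof to compare against; what you have written is a reconstruction of Delzant's own strategy, and its architecture is the right one. Your peripheral steps are essentially sound: the Hodge-theoretic decomposition $\chi = 2\,\mathrm{Re}(\omega)$ with $\omega$ holomorphic; the easy direction via the epimorphism $g_*\colon G\twoheadrightarrow\pi_1^{\mathrm{orb}}(S)$ induced by a fibration with connected fibers, the emptiness of $\Sigma$ for cocompact hyperbolic Fuchsian groups (via finite-index invariance of $\Sigma$ and genus $\geqslant 2$ surface subgroups), and the pullback monotonicity of exceptional classes under epimorphisms; and the final upgrades (descending the real class along $\omega=g^*\alpha$, cone points at multiple fibers, and excluding the smooth torus base via the finitely generated kernel and \cref{prop:bns_abelian_quotient}).

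The genuine gap is the one you flag yourself and then defer: the equivalence between BNS-exceptionality of the \emph{real} character $\chi$ and the failure of Simpson's alternative for the \emph{holomorphic} form $\omega$. This is not bookkeeping; it is the mathematical core of Delzant's theorem, and it is not formal. Concretely, the BNS criterion recalled in the paper concerns $\pi_1$-surjectivity of the unbounded component $\hat X_f$ of the superlevel set $\{f\geqslant 0\}$ inside the maximal free abelian cover $\hat X$, where $f=\mathrm{Re}\,F$ and $F$ is a holomorphic primitive of the pullback of $\omega$. Simpson's theorem, on the other hand, concerns connectivity and $\pi_1$-surjectivity of the fibers of $F$ on the \emph{minimal} cover on which $\omega$ becomes exact, namely the cover associated with the kernel of the complex period homomorphism of $\omega$. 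Since $\ker(\text{periods of }\omega)$ contains $\pi_1(\hat X)$ but is in general strictly larger than it, $\hat X$ properly covers that minimal cover, and the fibers of $F$ on $\hat X$ are disjoint unions of deck-translates of Simpson's leaves; connectivity statements therefore do not transfer verbatim between the two covers. Moreover, even on a fixed cover, passing from the half-space $\{f\geqslant 0\}$ (a real condition) to the level sets $F^{-1}(t)$ (a complex condition) and back requires an actual argument --- this is where Delzant works, distinguishing in particular characters with discrete and with dense period groups. As written, your proposal establishes the easy direction and the cosmetic reductions, but the implication ``$[\chi]\notin\Sigma(G)$ $\Rightarrow$ Simpson's fibration alternative holds'' is asserted rather than proved, so the theorem is not yet established.
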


The following conclusion is immediate, which is also stated in \cite[section 11.3]{PyBook}.
\begin{corollary}
(Symmetric property of K\"ahler groups)\label{coro:symmetric_bns_kahler} Any K\"ahler group $G$ has symmetric BNS invariant, i.e. 
$$\Sigma(G)=-\Sigma(G)$$
\end{corollary}

\begin{proof}
For any $[\chi]\notin \Sigma(G)$, by \cref{thm:delzant_bns_kahler}, we have holomorphic fibration $f: X\to S$ with $\chi=f^*(v)$ for some $v\in H^1(\pi_1^{\mathrm{orb}}(S);\R)$. Then $-\chi=f^*(-v)$ and $-v\in H^1(\pi_1^{\mathrm{orb}}(S);\R)$. Thus $[-\chi]\notin \Sigma(G)$, and the conclusion is proved.
\end{proof}

\section{The applications}\label{sec:hnn}

In this section, we give some applications to the study of K\"ahler groups.

The first result is the combination of \cite[Proposition 3.1 and Lemma 3.2]{FriedlVidussi16}. Their original proof uses the rank gradient, and we here prove the same result using solely the property of the BNS invariant.

\begin{proposition}\label{prop:amenable_kahler}
Let $G$ be a K\"ahler group which does not contain any free nonabelian subgroup, then $\Sigma(G)=\mathrm{S}(G)$ and the commutator subgroup $G'$ is finitely generated. In particular, if $G$ is an amenable K\"ahler group, we have the above conclusion holds.
\end{proposition}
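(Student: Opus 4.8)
The plan is to assemble the three structural facts already established in the excerpt and read off the conclusion. Since a K\"ahler group is finitely presented by definition, and $G$ contains no nonabelian free subgroup by hypothesis, \cref{prop:property_BNS_union} applies directly and gives
$$\Sigma(G)\cup -\Sigma(G)=\mathrm{S}(G).$$
This is the only place where the no-free-subgroup assumption enters.

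Next I would feed in the K\"ahler hypothesis through \cref{coro:symmetric_bns_kahler}, which asserts that the BNS invariant of any K\"ahler group is symmetric, $\Sigma(G)=-\Sigma(G)$. Substituting this into the union collapses the two summands onto a single copy, so that
$$\mathrm{S}(G)=\Sigma(G)\cup -\Sigma(G)=\Sigma(G),$$
i.e. $\Sigma(G)=\mathrm{S}(G)$. To obtain the statement about the commutator subgroup, I would then apply \cref{prop:bns_abelian_quotient} to the maximal abelian quotient $G/G'$, whose kernel is $N=G'$: the equality $\Sigma(G)=\mathrm{S}(G)$ is precisely the criterion appearing there for $G'$ to be finitely generated, giving the second assertion at once.

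For the final ``in particular'' clause I would only need the elementary observation that an amenable group contains no nonabelian free subgroup: the free group $F_2$ is non-amenable and amenability is inherited by subgroups, so an amenable $G$ automatically satisfies the running hypothesis, and the conclusion follows from what was just proved.

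I do not anticipate a genuine obstacle here; all the depth has been packaged into the cited inputs, namely Delzant's description underlying the symmetry in \cref{coro:symmetric_bns_kahler} and the Bieri--Neumann--Strebel union theorem in \cref{prop:property_BNS_union}, so the proposition reduces to a short synthesis of these. The one point meriting a word of care is the standing assumption $b_1(G)>0$ fixed in \cref{sec:bns}, which guarantees that $\mathrm{S}(G)$ is nonempty so that the equality $\Sigma(G)=\mathrm{S}(G)$ carries real content; otherwise the sphere is empty and the statement is vacuous (with $G'$ finitely generated anyway, since $b_1(G)=0$ forces $G'$ to have finite index).
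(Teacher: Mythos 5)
Your proposal is correct and follows exactly the paper's own argument: combine \cref{prop:property_BNS_union} with the symmetry from \cref{coro:symmetric_bns_kahler} to get $\Sigma(G)=\mathrm{S}(G)$, then apply \cref{prop:bns_abelian_quotient} for the finite generation of $G'$. Your added remarks on amenability implying no free nonabelian subgroup and on the $b_1(G)=0$ case are sound clarifications of points the paper leaves implicit.
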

\begin{proof}
By \cref{prop:property_BNS_union}, we have $\Sigma(G)\cup -\Sigma(G)=\mathrm{S}(G)$. Also, $\Sigma(G)=-\Sigma(G)$ by \cref{coro:symmetric_bns_kahler}, then we have the desired equality. The property of $G'$ follows directly from \cref{prop:bns_abelian_quotient}.
\end{proof}

\begin{remark*}
The above conclusion for amenable K\"ahler groups can also be seen as follows. We may assume $b_1(G)>0$; otherwise, it is trivially true. If the $\Sigma^c(G)\coloneqq\mathrm{S}(G)-\Sigma(G)$ is nonempty, by \cref{thm:delzant_bns_kahler}, we have a holomorphic fibration onto a hyperbolic Riemann orbifold, say $f: X\to S$, which induces a surjection $f_*$ on fundamental groups. Since $\pi_1(S)$ is not amenable while $\pi_1(X)$ is amenable, this contradicts the fact that the homomorphic image of an amenable group is still amenable. This argument was mentioned to me by Pierre Py.
\end{remark*}

Next, we will use the HNN-valuation introduced in \cite{Brown87} to reprove some classical results on K\"ahler groups in \cite{NR2008}. Notice a conflict in the ascending (resp. descending) HNN decomposition definition in the above two papers, and we will use the same notation as in \cite{NR2008} here. These results are also achieved by Fridel and Vidussi in \cite{FriedlVidussi16} using the rank gradient.

\begin{definition}[HNN decomposition]\label{HNN-decomposition}
Take any base group $B$ and a stable letter $t$. Let $\varphi: B_1\to B_2$ be an isomorphism between two subgroups $B_1, B_2$ of $B$. We say a group $G$ \textit{admits an HNN decomposition with base group} $B$ if it can be written as
$$G=\langle B,t\ |\ h^{t}\coloneqq t h t^{-1}=\varphi(h),\forall h\in B_1\rangle$$
If $B_1=B$ (and $B_2\neq B$), we say that $G$ is (properly) descending; if $B_2=B$ (and $B_1\neq B$), we say that $G$ is (properly) ascending.
\end{definition}

For a group $G$ admitting an HNN decomposition with base group $B$ and stable letter $t$, we can define a surjective group homomorphism $\chi: G\twoheadrightarrow  \Z$ by setting 
\begin{equation}\label{eq: associated chi}
\chi(g)=\begin{cases}
    1 & \text{\ if\ }g=t\\
    0 & \text{\ if\ }g\in B
\end{cases}
\end{equation}
We will call this $\chi$ the \textit{associated homomorphism.} Set $N=\ker(\chi)$, and obviously we have $N=\bigcup_{k\in\Z}t^k B t^{-k}$. 

Now we need the concept of an HNN valuation introduced by Brown, which can be seen as a generalization of the HNN decompositions (see \cite[Proposition 2.1(v)]{Brown87} and the remark afterwards).

\begin{definition}[HNN valuation]
Let $G$ be a group, and $\chi: G\to (\R,+)$ be a given group homomorphism. A function $v: G\to \R_\infty\coloneqq\R\cup\{\infty\}$ is called \textit{an HNN valution with respect to $\chi$} if it satisfies:
\begin{enumerate}[label=(\alph*)]
    \item $v(g^{-1})=v(g)+\chi(g)$.
    \item $v(gh)\ge \min\{v(g),v(h)-\chi(g)\}$
\end{enumerate}
We will say that $v$ is \textit{non-trivial} if in addition:\\
(c) $v|_{G_{\chi\le 0}}$ does not assume a minimal value.
\end{definition}
If $\chi\ne 0$, the condition (c) is equivalent to $v|_{N}$ is not bounded below, where $N=\ker(\chi)$ (\cite[Proposition 2.3]{Brown87}).

Note here $\chi: G\to \R$ can be any given group homomorphism. When $G$ is a group admitting an HNN decomposition, we have a natural choice of $\chi: G\to \Z\hookrightarrow\R$, say the associated homomorphism defined by \cref{eq: associated chi}.

The main criterion we will apply is:

\begin{proposition}(\cite[Proposition 3.1]{Brown87})\label{prop:HNN-valuation}
Let $G$ be a finitely generated group and $\chi:G\twoheadrightarrow\Z$ a surjective group homomorphism. The following three conditions are equivalent:
\begin{enumerate}[label=(\theenumi)]
\item $G$ admits a descending HNN decomposition with finitely generated base group $B$ and with $\chi$ as the associated homomorphism. 
\item $G$ does not admit a properly ascending HNN decomposition with $\chi$ as the associated homomorphism.
\item There is no non-trivial HNN valuation $v$ on $G$ with respect to this $\chi$.
\end{enumerate}
Moreover, if these conditions hold, then every HNN decomposition of $G$ with $\chi$ as associated homomorphism is descending.
\end{proposition}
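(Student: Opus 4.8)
The plan is to establish the two equivalences $(1)\Leftrightarrow(2)$ and $(2)\Leftrightarrow(3)$ separately, handling the first by elementary subgroup-chain arguments (in which finite generation of $G$ does all the work) and the second through the HNN valuation attached to the Bass--Serre tree. Throughout I normalize a finite generating set to $\{t,y_1,\dots,y_r\}$ with $\chi(t)=1$ and $y_i\in N:=\ker\chi$ (replacing each $x_i$ by $x_it^{-\chi(x_i)}$). For integers $a\le b$ write $B_{[a,b]}=\langle t^jy_it^{-j}: a\le j\le b,\ 1\le i\le r\rangle\le N$, so that $N=\bigcup_{a\le 0\le b}B_{[a,b]}$, the group $B_{[a,\infty)}:=\bigcup_b B_{[a,b]}$ satisfies $tB_{[a,\infty)}t^{-1}=B_{[a+1,\infty)}\subseteq B_{[a,\infty)}$, and symmetrically $B_{(-\infty,b]}$ satisfies $tB_{(-\infty,b]}t^{-1}=B_{(-\infty,b+1]}\supseteq B_{(-\infty,b]}$.

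For $(2)\Rightarrow(1)$ the key observation is that $B_{(-\infty,0]}$ together with $t$ realizes $G$ as an ascending HNN extension (a mapping torus of the injective endomorphism ``conjugation by $t^{-1}$'', using $N=\bigcup_{k\ge0}t^kB_{(-\infty,0]}t^{-k}$ and $B_{(-\infty,0]}\ni y_i$). Hypothesis $(2)$ forces this ascent to be improper, i.e. $B_{(-\infty,-1]}=B_{(-\infty,0]}$, whence applying powers of $t$ gives $B_{(-\infty,0]}=N$. Now finite generation enters: since each $ty_it^{-1}$ lies in $N=B_{(-\infty,0]}$ it is a \emph{finite} word, so $ty_it^{-1}\in B_{[-M,0]}$ for a common $M$; a short induction on $j$ then gives $t^jy_it^{-j}\in B_{[-M,0]}$ for all $j\ge0$, so that $B_{[-M,\infty)}=B_{[-M,0]}$ is finitely generated. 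Taking $B:=B_{[-M,0]}$ produces a descending HNN decomposition with finitely generated base and associated homomorphism $\chi$, which is $(1)$. The converse $(1)\Rightarrow(2)$ is again elementary: if $B$ is a finitely generated descending base and $C$ an allegedly properly ascending one, then $B\subseteq t^KCt^{-K}$ for some $K$ (finite generation inside the ascending union $N=\bigcup_k t^kCt^{-k}$), and feeding this through $N=\bigcup_k t^{-k}Bt^k$ forces $C=N$, contradicting properness of the ascent.

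For $(2)\Leftrightarrow(3)$ I pass through valuations. For $(3)\Rightarrow(2)$ I prove the contrapositive: a properly ascending decomposition yields a non-trivial HNN valuation. On the Bass--Serre tree $T$ with base vertex $v_0$ and height $h(gB)=\chi(g)$, set $v(g)=m(g)-\chi(g)$, where $m(g)$ is the minimal height along the geodesic $[v_0,gv_0]$; axioms (a),(b) are then tree-metric identities, and non-triviality ($v|_N$ unbounded below) is exactly the statement that upward branching at $v_0$, i.e. $B_1\subsetneq B$ (which holds for any non-descending, in particular properly ascending, decomposition), creates geodesics between height-$0$ vertices dipping arbitrarily low. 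For $(2)\Rightarrow(3)$, again by contraposition, a non-trivial valuation $v$ is turned into a properly ascending decomposition by filtering $N$ through the subgroups $N_{\ge c}=\{n\in N: v(n)\ge c\}$ (these are subgroups by (b) together with $v(n^{-1})=v(n)$ on $N$); for a suitable threshold the pair $(N_{\ge c},t)$ realizes $G$ as an ascending HNN extension, and unboundedness of $v$ below on $N$ makes the ascent proper. The same bounded-below computation gives the clean direct implication $(1)\Rightarrow(3)$: any valuation is bounded below on the finitely generated base $B$ by (b), and the descending relation $N=\bigcup_{k\ge0}t^{-k}Bt^k$ propagates this bound to all of $N$, so no non-trivial valuation exists; combined with the tree valuation showing that every \emph{non}-descending decomposition carries a non-trivial valuation, this also yields the ``moreover'' clause that under these equivalent conditions every HNN decomposition with associated homomorphism $\chi$ is descending.

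I expect the main obstacle to be the two places where an abstractly described object must be recognized as a genuine HNN decomposition of $G$ rather than a proper quotient: identifying $(B_{(-\infty,0]},t)$ and later $(N_{\ge c},t)$ as honest ascending HNN extensions, and $(B_{[-M,0]},t)$ as an honest descending one. Each is a mapping-torus-of-an-injective-endomorphism identification that must be justified via Britton's lemma and Bass--Serre normal forms, and keeping the bookkeeping of associated subgroups, heights, and the interaction of $\chi$ with conjugation by $t$ exactly right---rather than off by a sign or a one-step shift---is where the genuine care is required.
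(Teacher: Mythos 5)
The paper itself gives no proof of this proposition (it is quoted verbatim from Brown), so your attempt has to be judged against Brown's argument and on its own correctness. Your two implications $(1)\Leftrightarrow(2)$, the direct bounded-below argument for $(1)\Rightarrow(3)$, and the contrapositive of $(2)\Rightarrow(3)$ via the subgroups $N_{\ge c}$ are all sound and can be completed essentially as you sketch them. The genuine gap is in the remaining link you need to close the equivalence, namely $(3)\Rightarrow(2)$ (properly ascending $\Rightarrow$ non-trivial valuation): your tree valuation points in the wrong direction. In the Bass--Serre tree of an HNN decomposition the up-neighbours of a vertex correspond to $B/B_2$ and the down-neighbours to $B/B_1$ (because $tBt^{-1}\cap B=B_2$ and $t^{-1}Bt\cap B=B_1$, independently of how you label the edge set), so a properly ascending decomposition ($B_2=B$, $B_1\subsetneq B$) has a \emph{unique} up-neighbour at every vertex and branches \emph{downward} --- the opposite of your parenthetical claim. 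Consequently, for $n\in N=\bigcup_{k\ge0}t^kBt^{-k}$ the up-rays from $v_0$ and $nv_0$ merge at $t^kB$, the geodesic $[v_0,nv_0]$ rises to that merge point and comes back down, and it never dips below height $0$; hence $m(n)=0$ for every $n\in N$, and your valuation is identically zero on $N$, i.e.\ trivial --- it witnesses nothing. Worse, $v(g)=m(g)-\chi(g)$ is not an HNN valuation at all: axiom (b) fails. In $BS(1,2)=\langle a,t\mid tat^{-1}=a^2\rangle$ with its standard descending tree and $\chi(t)=1$, take $g=t^{-1}a$ and $h=t$: then $v(g)=-1-(-1)=0$, $v(h)=0-1=-1$, but $v(gh)=v(t^{-1}at)=-1-0=-1<0=\min\{v(g),\,v(h)-\chi(g)\}$. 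This failure is forced: $BS(1,2)$ satisfies condition (1), yet $m$ \emph{is} unbounded below on $N$ in this descending tree (take $t^{-k}at^k$), so if your formula were a valuation it would contradict the very proposition being proved.

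The fix is to use the maximum rather than the minimum: set $v(g)=-\max\{h(x):x\in[v_0,gv_0]\}$, which equals minus the maximal partial $\chi$-sum along a Britton-reduced word for $g$ (well defined since the sign sequence of a reduced word is an invariant of $g$). Axiom (a) follows from equivariance of heights, axiom (b) from $[v_0,ghv_0]\subseteq[v_0,gv_0]\cup g[v_0,hv_0]$ together with the fact that Britton reductions only remove peaks; this $v$ vanishes identically on $N$ exactly when the decomposition is descending, while if $B_1\subsetneq B$ the reduced elements $t^kct^{-k}$ with $c\in B\setminus B_1$ lie in $N$ and have $v=-k$, so $v|_N$ is unbounded below. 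This corrected construction (it is Brown's Proposition 2.1(v), the result this paper alludes to right before the statement) gives $(3)\Rightarrow(2)$ and, because it works for an arbitrary non-descending decomposition via normal forms rather than a fixed end of the tree (a general non-descending tree has no $G$-fixed end), it also delivers the ``moreover'' clause --- which in your write-up rests on the same broken construction.
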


With this notation at hand, Brown in \cite{Brown87} defined his invariant to be the set of classes $[\chi]\in \mathrm{S}(G)\coloneqq(\text{Hom}(G;\R)-\{0\})/ \R^{+}$ such that one representation (equivalently, all representations) $\chi$ satisfies condition (3) above with $\Z$ replaced by $\R$, i.e., there is no non-trivial HNN valuation $v$ on $G$ with respect to this $\chi$. He later, in \cite[Theorem 5.2]{Brown87}, showed that his invariant coincides with the BNS invariant in \cite{BNS87}. In conclusion, we have 
\begin{equation}\label{eq: bns by brown}
\Sigma(G)
=\left\{\, [\chi]\in \mathrm{S}(G)\ \middle|\ 
\begin{aligned}
 &\text{there is no non-trivial HNN valuation }\\
 &v \text{ on } G \text{\ with respect to\ } \chi
\end{aligned}
\,\right\}.
\end{equation}

We can now reprove the theorems in \cite{NR2008} on HNN decompositions.

\begin{theorem}(\cite[Theorem 0.2(b)]{NR2008})\label{thm: nr 1}
Let $G$ be the fundamental group of a compact K\"ahler manifold $X$. If $G$ admits a properly ascending HNN decomposition, then $X$ admits a holomorphic fibration onto a hyperbolic Riemann orbifold.
\end{theorem}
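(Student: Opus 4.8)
The plan is to translate the combinatorial hypothesis on the HNN decomposition into a statement about the complement of the BNS invariant, and then feed this into Delzant's theorem (\cref{thm:delzant_bns_kahler}) to extract the fibration. First I would let $\chi\colon G\twoheadrightarrow\Z$ be the homomorphism associated to the given properly ascending HNN decomposition, as defined in \eqref{eq: associated chi}. Since $\chi(t)=1$, this $\chi$ is non-trivial and surjective onto $\Z$, so it is a legitimate input for Brown's criterion \cref{prop:HNN-valuation}. The key observation is that, by hypothesis, $G$ \emph{does} admit a properly ascending HNN decomposition with $\chi$ as its associated homomorphism, which is precisely the negation of condition (2) in \cref{prop:HNN-valuation}.

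Next I would run the equivalences of \cref{prop:HNN-valuation} contrapositively: since condition (2) fails for this $\chi$, condition (3) must fail as well, so there exists a non-trivial HNN valuation $v$ on $G$ with respect to $\chi$. At this point I invoke Brown's identification of his invariant with the BNS invariant, recorded in \eqref{eq: bns by brown}: the existence of a non-trivial HNN valuation with respect to $\chi$ is exactly the statement that $[\chi]\notin\Sigma(G)$. Hence $\chi$ is an exceptional class.

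Finally, I would regard $\chi$ as a non-trivial element of $H^1(G;\R)=\mathrm{Hom}(G;\R)$ and apply Delzant's theorem \cref{thm:delzant_bns_kahler}. Because $[\chi]\notin\Sigma(G)$, this produces a holomorphic fibration $f\colon X\to S$ onto a hyperbolic Riemann orbifold $S$ of genus at least $1$ with $\chi\in f^*(H^1(\pi_1^{\mathrm{orb}}(S);\R))$; in particular $X$ admits a holomorphic fibration onto a hyperbolic Riemann orbifold, which is the desired conclusion. Since the heavy lifting is done entirely by the cited results, I expect no genuine analytic obstacle here; the whole argument is a dictionary lookup. The only steps demanding care are keeping the direction of the equivalences in \cref{prop:HNN-valuation} straight—remembering that a properly ascending decomposition is the \emph{failure} of condition (2), which then forces the existence of the valuation in condition (3)—and verifying that the associated homomorphism is honestly surjective onto $\Z$, since that surjectivity is exactly what licenses the use of Brown's criterion.
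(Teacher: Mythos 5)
Your proposal is correct and follows essentially the same route as the paper's own proof: associate $\chi$ to the ascending decomposition, use the failure of condition (2) in \cref{prop:HNN-valuation} to deduce the failure of (3) and hence the existence of a non-trivial HNN valuation, conclude $[\chi]\notin\Sigma(G)$ via \eqref{eq: bns by brown}, and apply \cref{thm:delzant_bns_kahler}. No gaps; the only difference is that you spell out the bookkeeping (surjectivity of $\chi$, direction of the equivalences) slightly more explicitly than the paper does.
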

\begin{proof}
Let $\chi: G\to \Z$ be the associated homomorphism for the properly ascending HNN decomposition. Since condition (2) of \cref{prop:HNN-valuation} is not satisfied, neither is condition (3) there. Then there exists a non-trivial HNN valuation $v$ on $G$ with respect to this $\chi$, which means $[\chi] \notin \Sigma(G)$ by \cref{eq: bns by brown}. Then $X$ fibers over a hyperbolic Riemann orbifold according to \cref{thm:delzant_bns_kahler}. 
\end{proof}

\begin{theorem}(\cite[Theorem 0.3 (b)]{NR2008})\label{thm: nr 2}
Any group $G$ admitting a properly ascending (or descending) HNN decomposition with a finitely generated base group is not a K\"ahler group.
\end{theorem}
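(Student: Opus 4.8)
The plan is to upgrade the argument of \cref{thm: nr 1} from ``$X$ fibers'' to an outright contradiction by exploiting the finite generation of the base group together with the symmetry of the BNS invariant (\cref{coro:symmetric_bns_kahler}). Suppose, for contradiction, that $G$ is a K\"ahler group admitting a properly ascending HNN decomposition with finitely generated base group $B$ and stable letter $t$, and let $\chi\colon G\twoheadrightarrow\Z$ be the associated homomorphism. Exactly as in \cref{thm: nr 1}, condition (2) of \cref{prop:HNN-valuation} fails for $\chi$ (the group \emph{does} admit a properly ascending decomposition with $\chi$ associated), hence condition (3) fails, so there is a non-trivial HNN valuation with respect to $\chi$; by \cref{eq: bns by brown} this gives $[\chi]\notin\Sigma(G)$.

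The crucial observation is that passing from the stable letter $t$ to $t^{-1}$ converts the given properly ascending decomposition into a properly \emph{descending} one, with respect to $-\chi$ and with the \emph{same} finitely generated base $B$. Concretely, if the defining isomorphism is $\varphi\colon B_1\to B$ with image all of $B$ and $B_1\subsetneq B$, then conjugation by $t^{-1}$ realizes $\varphi^{-1}\colon B\to B_1$, whose domain is all of $B$ and whose image $B_1$ is proper; this is precisely the clash of conventions flagged before \cref{HNN-decomposition}. Thus $G$ admits a descending HNN decomposition with finitely generated base and $-\chi$ as associated homomorphism, so condition (1) of \cref{prop:HNN-valuation} now holds for $-\chi$. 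By the equivalence, condition (3) holds for $-\chi$ as well: there is no non-trivial HNN valuation with respect to $-\chi$, i.e. $[-\chi]\in\Sigma(G)$.

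Now I invoke the symmetry of \cref{coro:symmetric_bns_kahler}: since $\Sigma(G)=-\Sigma(G)$, the relation $[-\chi]\in\Sigma(G)$ forces $[\chi]\in\Sigma(G)$, contradicting the first step. Hence no such $G$ is K\"ahler. The properly descending case is entirely parallel: a properly descending decomposition with finitely generated base directly satisfies condition (1) for $\chi$, giving $[\chi]\in\Sigma(G)$, while its dual (via $t\mapsto t^{-1}$) is a properly ascending decomposition with respect to $-\chi$, forcing $[-\chi]\notin\Sigma(G)$; symmetry again collides the two.

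The only point requiring care---more a piece of bookkeeping than a genuine obstacle---is the reversal of the ascending/descending dichotomy under $t\mapsto t^{-1}$ in the second step, and in particular the fact that the base group, being literally unchanged, remains finitely generated. This finite-generation hypothesis is exactly what allows the implication (condition (1) $\Rightarrow$ condition (3)) of \cref{prop:HNN-valuation} to be applied to $-\chi$; it is precisely the ingredient absent in \cref{thm: nr 1}, which is why the latter yields only a holomorphic fibration rather than an impossibility.
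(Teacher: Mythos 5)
Your proof is correct and is essentially identical to the paper's own argument: both establish $[\chi]\notin\Sigma(G)$ via the failure of conditions (2) and (3) of \cref{prop:HNN-valuation}, then use the stable-letter switch $t\mapsto t^{-1}$ to exhibit a properly descending decomposition with the same finitely generated base and associated homomorphism $-\chi$, deduce $[-\chi]\in\Sigma(G)$ from condition (1), and contradict the symmetry of \cref{coro:symmetric_bns_kahler}. The descending case is handled by the same dualization in both proofs.
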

\begin{proof}
We first consider the case $G$ admits a properly ascending HNN decomposition with a finitely generated base group $B$ and stable letter $t$, and write
$$G=\langle B,t|t h t^{-1}=\varphi(h)\in B, \forall h\in B_1\rangle$$
with $B_1\subsetneq B$ and $\varphi: B_1\to B$ is a group isomorphism. By assumption on $G$, the condition (2) of \cref{prop:HNN-valuation} is not satisfied, and neither is (3) for the associated homomorphism $\chi$ with $\chi(t)=1$. Thus, we have $[\chi]\notin \Sigma(G)$ by \cref{eq: bns by brown}.

On the other hand, taking the stable letter $s=t^{-1}$ instead, we can write
$$G=\langle B,s|shs^{-1}=\varphi^{-1}(h)\in B_1, \forall h\in B\rangle$$
which is a (properly) descending HNN decomposition of $G$. Note the associated homomorphism now is $-\chi$, since we need its value at $s=t^{-1}$ to be 1 as in \cref{eq: associated chi}. Now we need the assumption that $B$ is finitely generated, which ensures condition (1) in \cref{prop:HNN-valuation} is satisfied with $-\chi$ as the associated homomorphism. Then (3) is also satisfied for $-\chi$, and we have $[-\chi]\in \Sigma(G)$ by \cref{eq: bns by brown}. This proves that $G$ can not be a K\"ahler group, since it contradicts the symmetric property of K\"ahler groups (\cref{coro:symmetric_bns_kahler}).

If $G$ admits a properly descending HNN decomposition with a finitely generated base group $B$ and $\chi$ is the associated homomorphism, we can similarly prove $[\chi]\in \Sigma(G)$ and $[-\chi]\notin \Sigma(G)$, and thus $G$ is not K\"ahler.
\end{proof}

\section*{Acknowledgments}
This work is inspired by the new book ``Lectures on K\"ahler groups'' by Professor Pierre Py, and the author would like to thank him for this aspect. The author also thanks Professor Yongqiang Liu for many useful conversations and the referee for helpful comments that make Section 3 especially more self-contained and clearer. The author is partially supported by the China Postdoctoral Science Foundation (reference 2023M744396) and China Scholarship Council (No. 202406340174).

\bibliographystyle{ssmfalpha} 
\bibliography{main}

@article {FriedlVidussi16,
    AUTHOR = {Friedl, Stefan and Vidussi, Stefano},
     TITLE = {Rank gradients of infinite cyclic covers of {K}\"ahler
              manifolds},
   JOURNAL = {J. Group Theory},
  FJOURNAL = {Journal of Group Theory},
    VOLUME = {19},
      YEAR = {2016},
    NUMBER = {5},
     PAGES = {941--957},
      ISSN = {1433-5883,1435-4446},
   MRCLASS = {20J05 (20E06 20E26 53C55)},
  MRNUMBER = {3545912},
MRREVIEWER = {Dimitrios\ Varsos},
       DOI = {10.1515/jgth-2016-0019},
       URL = {https://doi.org/10.1515/jgth-2016-0019},
}

@unpublished{BSunpublished,
  author       = {Bieri, Robert  and Strebel, Ralph},
  title        = {Geometric invariants for discrete groups},
  year         = {},
  note         = {Unpublished monograph},
}

@article {NR2008,
    AUTHOR = {Napier, Terrence and Ramachandran, Mohan},
     TITLE = {Filtered ends, proper holomorphic mappings of {K}\"{a}hler
              manifolds to {R}iemann surfaces, and {K}\"{a}hler groups},
   JOURNAL = {Geom. Funct. Anal.},
  FJOURNAL = {Geometric and Functional Analysis},
    VOLUME = {17},
      YEAR = {2008},
    NUMBER = {5},
     PAGES = {1621--1654},
      ISSN = {1016-443X,1420-8970},
   MRCLASS = {32Q15 (32H35 53C55)},
  MRNUMBER = {2377498},
MRREVIEWER = {Azniv\ Kasparian},
       DOI = {10.1007/s00039-007-0632-9},
       URL = {https://doi.org/10.1007/s00039-007-0632-9},
}

@article {Brown87,
    AUTHOR = {Brown, Kenneth S.},
     TITLE = {Trees, valuations, and the {B}ieri-{N}eumann-{S}trebel
              invariant},
   JOURNAL = {Invent. Math.},
  FJOURNAL = {Inventiones Mathematicae},
    VOLUME = {90},
      YEAR = {1987},
    NUMBER = {3},
     PAGES = {479--504},
      ISSN = {0020-9910,1432-1297},
   MRCLASS = {20F05 (20E06 22E40)},
  MRNUMBER = {914847},
MRREVIEWER = {D.\ J.\ Collins},
       DOI = {10.1007/BF01389176},
       URL = {https://doi.org/10.1007/BF01389176},
}

@article {Delzant10,
    AUTHOR = {Delzant, Thomas},
     TITLE = {L'invariant de {B}ieri-{N}eumann-{S}trebel des groupes
              fondamentaux des vari\'{e}t\'{e}s k\"{a}hl\'{e}riennes},
   JOURNAL = {Math. Ann.},
  FJOURNAL = {Mathematische Annalen},
    VOLUME = {348},
      YEAR = {2010},
    NUMBER = {1},
     PAGES = {119--125},
      ISSN = {0025-5831,1432-1807},
   MRCLASS = {32J27 (20F34 32Q15 57R18)},
  MRNUMBER = {2657436},
MRREVIEWER = {G.\ K.\ Sankaran},
       DOI = {10.1007/s00208-009-0468-8},
       URL = {https://doi.org/10.1007/s00208-009-0468-8},
}

@article {Simpson,
    AUTHOR = {Simpson, Carlos},
     TITLE = {Lefschetz theorems for the integral leaves of a holomorphic
              one-form},
   JOURNAL = {Compositio Math.},
  FJOURNAL = {Compositio Mathematica},
    VOLUME = {87},
      YEAR = {1993},
    NUMBER = {1},
     PAGES = {99--113},
      ISSN = {0010-437X,1570-5846},
   MRCLASS = {32S50 (14E20 14K20)},
  MRNUMBER = {1219454},
MRREVIEWER = {Claire\ Voisin},
       URL = {http://www.numdam.org/item?id=CM_1993__87_1_99_0},
}

@article {BNS87,
    AUTHOR = {Bieri, Robert and Neumann, Walter D. and Strebel, Ralph},
     TITLE = {A geometric invariant of discrete groups},
   JOURNAL = {Invent. Math.},
  FJOURNAL = {Inventiones Mathematicae},
    VOLUME = {90},
      YEAR = {1987},
    NUMBER = {3},
     PAGES = {451--477},
      ISSN = {0020-9910,1432-1297},
   MRCLASS = {20J05 (22E40 57R19)},
  MRNUMBER = {914846},
MRREVIEWER = {Peter\ H.\ Kropholler},
       DOI = {10.1007/BF01389175},
       URL = {https://doi.org/10.1007/BF01389175},
}

@book {Pybook,
    AUTHOR = {Py, Pierre},
     TITLE = {Lectures on {K}\"ahler groups},
    SERIES = {Princeton Mathematical Series},
    VOLUME = {52},
 PUBLISHER = {Princeton University Press, Princeton, NJ},
      YEAR = {[2025] \copyright 2025},
     PAGES = {xiii+382},
      ISBN = {[9780691247151]; [9780691247168]},
   MRCLASS = {53C55 (20F34 20F65 30Fxx 32Q55 32U05 57K20 57R18)},
  MRNUMBER = {4904234},
}

@book {ABCKT,
    AUTHOR = {Amor\'os, J. and Burger, M. and Corlette, K. and Kotschick, D.
              and Toledo, D.},
     TITLE = {Fundamental groups of compact {K}\"ahler manifolds},
    SERIES = {Mathematical Surveys and Monographs},
    VOLUME = {44},
 PUBLISHER = {American Mathematical Society, Providence, RI},
      YEAR = {1996},
     PAGES = {xii+140},
      ISBN = {0-8218-0498-7},
   MRCLASS = {32J27 (14F35 32C17 53C55)},
  MRNUMBER = {1379330},
MRREVIEWER = {Raul\ Quiroga-Barranco},
       DOI = {10.1090/surv/044},
       URL = {https://doi.org/10.1090/surv/044},
}

\end{document}